\documentclass[11pt]{amsart}

\usepackage{amsmath,amssymb,amsthm}
\usepackage[margin=1in]{geometry}
\usepackage{hyperref}
\usepackage{tikz}

\newtheorem{theorem}{Theorem}
\newtheorem{proposition}{Proposition}
\newtheorem{lemma}{Lemma}
\newtheorem{definition}{Definition}
\newtheorem{remark}{Remark}

\title[Decomposition-Closed Sublattices as Minimizer Sets]
{Decomposition-Closed Sublattices as Minimizer Sets\\ of Modular Functions over Distributive Lattices}

\author{\large{A}\small{HMET} \large{A}\small{LKAN}}
\address{Ahmet Alkan: Department of Economics, Sabanc{\i} University}
\email{alkan@sabanciuniv.edu.tr}
\author{\large{K}\small{EMAL} \large{Y}\small{ILDIZ}}
\address{Kemal Yildiz: Department of Economics, Bilkent University}
\email{kemal.yildiz@bilkent.edu.tr}
\date{\today}

\begin{document}

\begin{abstract}
We characterize the subsets of a finite distributive lattice that arise as the minimizer sets of modular functions. It is immediate that the minimizer set of any modular function is a decomposition-closed sublattice. Our main theorem establishes the converse: every decomposition-closed sublattice is the minimizer set of a modular function. Our proof is constructive. Using Birkhoff's representation, we decompose the problem along the intervals of an arbitrary maximal chain of the given sublattice. The key ingredient is a weight assignment on connected difference posets that yields local modular functions vanishing precisely at the interval endpoints. We also show by example that the characterization fails for nondistributive lattices.
\end{abstract}

\maketitle

\section{Introduction}

A natural problem in the theory of distributive lattices is to determine which sublattices arise as the minimizer sets of modular functions. Through Birkhoff's representation \cite{Birkhoff1937,DaveyPriestley}, every modular function admits a simple description in terms of weights on the join-irreducible elements.

The central notion introduced in this paper is that of a decomposition-closed sublattice. It is defined by a natural closure condition on decompositions of intervals whose endpoints belong to the sublattice. It is immediate that every minimizer set of a modular function is decomposition closed. Our main result establishes the converse: decomposition-closedness is exactly the property required.

We therefore obtain a complete characterization of the minimizer sets of modular functions on finite distributive lattices. Our proof is direct and constructive. Starting from an arbitrary maximal chain of the given sublattice, we build the desired modular function by combining explicit local constructions on the connected difference posets arising from the Birkhoff representation.

\section{Setup}

Let $(S,\le)$ be a finite lattice. A subset $Q$ of $S$ is the \emph{minimizer set} of $F : S \to \mathbb{R}$ if
\[
Q = \arg\min_{s \in S} F(s).
\]
A function $F : S \to \mathbb{R}$ is \emph{modular} if
\[
F(s) + F(s') = F(s \wedge s') + F(s \vee s') \qquad \text{for all } s, s' \in S.
\]

\begin{definition}\label{def:decomp}
Let $I = [a,b]$ denote the interval
\[
I = \{ x \in S : a \le x \le b \},
\]
where $a \le b$. A pair of elements $s, s' \in I$ is called a \emph{decomposition} of $I$ if
\[
s \wedge s' = a, \qquad s \vee s' = b.
\]
A sublattice $Q \subseteq S$ is \emph{decomposition closed} if, whenever $I = [a,b]$ is an interval of $S$ with $a, b \in Q$, every decomposition of $I$ is contained in $Q$.
\end{definition}

\begin{proposition}\label{prop:necessity}
If $Q$ is the minimizer set of a modular $F$ on $S$, then $Q$ is a decomposition-closed sublattice of $S$.
\end{proposition}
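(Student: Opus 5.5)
The plan is to use the modular identity twice: once to get the sublattice property, and once to get decomposition-closedness. Let $m = \min_{s\in S} F(s)$, so that $Q = \{s : F(s) = m\}$. Since $S$ is finite, $Q$ is nonempty.

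\emph{Sublattice.} Take $s, s' \in Q$. Modularity gives
\[
F(s\wedge s') + F(s\vee s') = F(s) + F(s') = 2m .
\]
Each term on the left is at least $m$. If either were strictly greater than $m$, the sum would exceed $2m$. Hence $F(s\wedge s') = F(s\vee s') = m$, so both $s\wedge s'$ and $s\vee s'$ lie in $Q$.

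\emph{Decomposition-closedness.} Let $I=[a,b]$ with $a,b\in Q$, and let $s,s'\in I$ be a decomposition of $I$, so $s\wedge s' = a$ and $s\vee s' = b$. Modularity now gives
\[
F(s) + F(s') = F(a) + F(b) = 2m .
\]
Again $F(s)\ge m$ and $F(s')\ge m$, so both must equal $m$. Therefore $s, s'\in Q$.

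The argument is the same squeeze in both directions: a sum of two values, each bounded below by $m$, equals $2m$, so each value equals $m$. I do not expect any real obstacle. The only care needed is to use nonemptiness of $Q$, which holds because $S$ is finite, and to apply the identity once with $s,s'$ on the left and once with $s,s'$ on the right. Distributivity of $S$ is not used here.
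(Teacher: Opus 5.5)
Your proof is correct and is essentially the paper's argument. The paper normalizes $F$ so that $\min F = 0$ and applies the same squeeze twice: once to $F(s\wedge s') + F(s\vee s')$ for the sublattice property, and once to $F(s)+F(s')$ for decomposition-closedness.
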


\begin{proof}
Normalize $F$ so $\min_{s \in S} F(s) = 0$; then $Q = F^{-1}(0)$. If $F(s) = F(s') = 0$, modularity gives
\[
F(s \wedge s') + F(s \vee s') = 0
\]
with both terms $\ge 0$, so both vanish: $F^{-1}(0)$ is a sublattice. If $a, b \in Q$, $a \le b$, and $s, s'$ form a decomposition of the interval $[a,b]$, then
\[
F(s) + F(s') = F(a) + F(b) = 0,
\]
so $F(s) = F(s') = 0$. Thus $F^{-1}(0)$ is decomposition closed. The converse fails for general lattices (Appendix~\ref{app:N5}), showing that the distributivity assumption in our main theorem is essential.
\end{proof}

\begin{theorem}\label{thm:main}
Every decomposition-closed sublattice $Q$ of a finite distributive lattice $S$ is the minimizer set of a modular function.
\end{theorem}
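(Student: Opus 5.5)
The plan is to pass to the Birkhoff representation and build the weights explicitly. Write $S$ as the lattice of down-sets of the poset $P$ of join-irreducibles, with $\wedge=\cap$ and $\vee=\cup$. Every function of the form $F(x)=\sum_{p\in x} w(p)$ with $w:P\to\mathbb{R}$ is modular, so it suffices to find weights $w$ with $\arg\min F = Q$. Since $Q$ is a nonempty sublattice, it has a least element $a_0$ and a greatest element $a_k$. Fix a maximal chain $a_0<a_1<\dots<a_k$ in $Q$ and set $D_i=a_i\setminus a_{i-1}$, viewed as a subposet of $P$. The weights will be:
\[
w=-1 \text{ on } a_0,\qquad w=+1 \text{ on } P\setminus a_k,\qquad w=w_i \text{ on } D_i,
\]
where each $w_i$ is a local weight to be constructed.

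\textbf{Step 1: each $D_i$ is connected.} We show $D_i$ is connected in the comparability sense. If $D_i=C\sqcup C'$ with no comparabilities between the nonempty parts $C$ and $C'$, then $a_{i-1}\cup C$ and $a_{i-1}\cup C'$ are down-sets. Indeed, anything below an element of $C$ lies in $a_i$, hence in $a_{i-1}$ or in $C$. These two down-sets form a decomposition of $[a_{i-1},a_i]$. By decomposition-closedness both lie in $Q$, strictly between $a_{i-1}$ and $a_i$, which contradicts maximality of the chain.

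\textbf{Step 2: local weights on $D_i$.} This is the key step. For a connected finite poset $D$, put
\[
w(p)=\#\{\text{upper covers of } p \text{ in } D\}-\#\{\text{lower covers of } p \text{ in } D\}.
\]
For a down-set $X$ of $D$, the internal Hasse edges of $X$ cancel in $\sum_{p\in X}w(p)$. No Hasse edge enters $X$ from below, because $X$ is a down-set. Hence $w(X)$ equals the number of Hasse edges going from $X$ upward into $D\setminus X$. This gives $w(D)=0$ and $w(\emptyset)=0$, while for a proper nonempty down-set connectedness forces at least one crossing edge, so $w(X)>0$.

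\textbf{Step 3: $F\ge F(a_0)$ with the right equality cases.} For an arbitrary down-set $x$,
\[
F(x)=w(x\cap a_0)+\sum_i w_i(x\cap D_i)+w(x\setminus a_k).
\]
Each $x\cap D_i$ is a down-set of $D_i$. So $F(x)\ge -|a_0|=F(a_0)$, with equality iff three conditions hold: $a_0\subseteq x\subseteq a_k$, and each $x\cap D_i$ is either $\emptyset$ or $D_i$. Every element of $Q$ satisfies these conditions, since for $q\in Q$ the chain elements $a_i\wedge q$ and $a_{i-1}\vee(a_i\wedge q)$ interact with the maximal chain appropriately. The cleanest route is the converse direction below, combined with the observation that every $q\in Q$ attains the value $F(a_0)$.

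\textbf{Step 4: equality cases lie in $Q$, and conversely.} Let $x$ satisfy the equality conditions. We prove $x\cap a_i\in Q$ by induction on $i$; the base case is $x\cap a_0=a_0$. If $D_i\cap x=\emptyset$, then $x\cap a_i=x\cap a_{i-1}$. If $D_i\subseteq x$, then $x\cap a_i$ and $a_{i-1}$ have meet $x\cap a_{i-1}$ and join $a_i$. This is a decomposition of an interval whose endpoints lie in $Q$, by induction and by choice of the chain, so $x\cap a_i\in Q$. Taking $i=k$ gives $x\in Q$.

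For the converse, let $q\in Q$. Then $q\wedge a_i$ and $q\vee a_{i-1}$ lie in $Q$, and so does $a_{i-1}\vee(q\wedge a_i)$, which sits in $[a_{i-1},a_i]$. By maximality it equals $a_{i-1}$ or $a_i$, which means $q\cap D_i\in\{\emptyset,D_i\}$. Also $a_0\le q\le a_k$. Hence $F(q)=F(a_0)$, and $\arg\min F=Q$.

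The main obstacle is Step 2, producing a local modular function that vanishes exactly at the interval endpoints. The Hasse-edge-counting weight handles it once Step 1 has established connectedness.
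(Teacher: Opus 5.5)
Your proof is correct, and the function you build is the paper's. Your Step~1 is Lemma~\ref{lem:connected}. Step~2 is Lemma~\ref{lem:weight} with the whole Hasse diagram of $D_i$ in place of a spanning tree; the tree is unnecessary, since the crossing-edge count works either way once you note that a comparability-connected finite poset is Hasse-connected. Your weights $-1$ on $a_0$ and $+1$ on $P\setminus a_k$ are $F_{\mathrm{bot}}$ (up to a constant) and $F_{\mathrm{top}}$.

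Where you genuinely diverge is in verifying that the minimizers are exactly $Q$.

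\textbf{The paper's route.} It gets $F_0\equiv 0$ on $Q$ from the classical fact of Lemma~\ref{lem:vanish}, proved in the appendix via the Birkhoff representation of $Q$ and linear extensions. It gets positivity on $S_0\setminus Q$ from an extremal argument (Lemma~\ref{lem:positive}). A shortest chain interval containing a spurious zero $s^*$ is split at an interior chain element $q$. Modularity and nonnegativity force $s^*\wedge q$ and $s^*\vee q$ into $Q$, and decomposition-closedness then captures $s^*$.

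\textbf{Your route.} You identify the minimizers explicitly as the down-sets of the form $a_0\cup\bigcup_{i\in J}D_i$, and compare this set with $Q$ purely combinatorially. Inclusion in $Q$ is an induction along the chain, applying decomposition-closedness to the pair $x\cap a_i$, $a_{i-1}$ in $[x\cap a_{i-1},a_i]$. The reverse inclusion uses only the sublattice property and maximality of the chain, via $a_{i-1}\vee(q\wedge a_i)\in Q\cap[a_{i-1},a_i]=\{a_{i-1},a_i\}$.

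\textbf{What each buys.} Your argument is shorter and avoids both the appendix lemma and the minimal-counterexample argument. It also gives a structural by-product: every element of $Q$ is $a_0$ together with a union of whole blocks $D_i$. The paper's argument works at the level of values of $F_0$. In effect it shows that, given decomposition-closedness, any nonnegative modular function vanishing on the chain and positive strictly inside each $[q_{n+1},q_n]$ has zero set exactly $Q$ on $S_0$, regardless of how it was built.

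One blemish: the vague sentence in Step~3 about chain elements interacting ``appropriately'' is superseded by the explicit argument at the end of Step~4 and should be deleted.
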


The characterization is constructive. Beyond establishing existence, the construction shows that every maximal chain of the given sublattice yields a modular function having the prescribed minimizer set. Appendix~\ref{app:example} works through the construction in full for a small example.

\section{Birkhoff Representation and Overview of Proof}

By the Birkhoff Representation Theorem \cite{Birkhoff1937,DaveyPriestley}, $(S,\le)$ is isomorphic, via $s \mapsto D_s = \{ r \in R : r \le s \}$, to the lattice of downsets of the poset $R$ of join-irreducible elements of $S$, with
\[
D_s \cap D_{s'} = D_{s \wedge s'}, \qquad D_s \cup D_{s'} = D_{s \vee s'}.
\]
We call $D_b \setminus D_a$, equipped with the induced order, the \emph{difference poset} of the interval $[a,b]$. A modular $F$ on $S$ is represented by a weight $g : R \to \mathbb{R}$ via
\[
F(s) = F(s^+) - \sum_{r \in D_{s^+} \setminus D_s} g(r)
\]
where $s^+ = \max S$, and conversely any such formula is modular, for any choice of $g$. Let $s^+$ denote $\max S$, $s^-$ denote $\min S$, $q^+$ and $q^-$ the max and min of $Q$, and $S_0 = [q^-, q^+] \subseteq S$ the interval between them.

An interval $[a,b]$ in $S$, with $a \le b$, is called a \emph{$Q$-interval} if $a, b \in Q$. A $Q$-interval $[a,b]$ is called \emph{$Q$-prime} if it contains no $q \in Q$ other than $a, b$.

\medskip
\noindent \textit{Overview of the Proof.}
We choose any maximal chain $q^+ = q_1 > \cdots > q_N = q^-$ in $Q$. Decomposition closedness implies that each $Q$-prime interval $[q_{n+1}, q_n]$ corresponds, under the Birkhoff representation, to a connected difference poset (Lemma~\ref{lem:connected}). On each such poset we construct a nonnegative modular function that vanishes only at the two endpoints of the interval (Lemma~\ref{lem:weight}). Their sum is a modular function that vanishes on all of $Q$ (Lemma~\ref{lem:vanish}) and is strictly positive on $[q^-, q^+] \setminus Q$ (Lemma~\ref{lem:positive}). Finally, adding two modular penalty functions outside this interval gives a modular function whose minimizer set is exactly $Q$.

\section{Lemma 1 (Connectedness of $Q$-Prime Intervals)}

\begin{lemma}\label{lem:connected}
If $Q$ is decomposition closed and $[a,b]$ is a $Q$-prime interval in $S$, then the difference poset $D_b \setminus D_a$ is connected, meaning that its undirected Hasse diagram is connected.
\end{lemma}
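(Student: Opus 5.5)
We argue by contradiction: if $P = D_b \setminus D_a$ were disconnected, we would produce a decomposition of $[a,b]$ with an element strictly between $a$ and $b$, which decomposition-closedness forces into $Q$, contradicting $Q$-primeness. Note first that $a \neq b$ may be assumed, since for $a=b$ the difference poset is empty and there is nothing to prove (or the statement is vacuous). So suppose $P$ is nonempty and its undirected Hasse diagram is disconnected. Then we can write $P = C \sqcup C'$ with $C, C'$ nonempty, each a union of connected components, and with no comparabilities between an element of $C$ and an element of $C'$. Indeed, any comparability $x<y$ inside $P$ is realized by a chain of covering relations in $P$: the interval $[x,y]$ of $R$ lies inside $P$, because $D_b$ is a downset and $R\setminus D_a$ is an upset. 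Hence comparable elements lie in the same component.

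Next define $D = D_a \cup C$ and $D' = D_a \cup C'$, and check that both are downsets of $R$. Take $y \in D_a \cup C$ and $x \le y$. If $y \in D_a$, then $x \in D_a$. If $y \in C$, then $x \in D_b$; either $x \in D_a$, or $x \in P$ and is comparable to $y$, in which case $x \in C$ by the no-comparability property. By Birkhoff's representation, $D = D_s$ and $D' = D_{s'}$ for unique $s, s' \in S$. Then $D_s \cap D_{s'} = D_a$ and $D_s \cup D_{s'} = D_b$, so $s \wedge s' = a$ and $s \vee s' = b$, with $s, s' \in [a,b]$. Thus $(s,s')$ is a decomposition of the $Q$-interval $[a,b]$.

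By decomposition-closedness, $s \in Q$. Since $C$ and $C'$ are both nonempty, we have $D_a \subsetneq D_s \subsetneq D_b$, so $a < s < b$. This contradicts the assumption that $[a,b]$ is $Q$-prime. Hence $P$ is connected.

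The only step needing care is the first: justifying that the partition into components leaves no cross comparabilities. The argument above shows that this holds because $P$ is order-convex in $R$, so Hasse-diagram connectivity of $P$ with its induced order agrees with connectivity under comparability. The rest is direct bookkeeping through Birkhoff's representation.
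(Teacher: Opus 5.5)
Your proof is correct and follows the paper's argument essentially step for step. Both use order-convexity of $D_b \setminus D_a$ to turn Hasse-disconnectedness into a split with no cross comparabilities, form the downsets $D_a \cup C$ and $D_a \cup C'$, and apply Birkhoff's representation to obtain a decomposition of $[a,b]$ with a member strictly between $a$ and $b$, contradicting $Q$-primeness; you are, if anything, slightly more careful than the paper in deriving the no-cross-comparability property from convexity before using it.
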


\begin{proof}
Let $D := D_b \setminus D_a$. First, $D$ is order-convex in $R$: if $x, y \in D$ and $x \le z \le y$, then $z \in D_b$ (as $D_b$ is downward closed and $z \le y \in D_b$), and $z \notin D_a$ (else $x \le z \in D_a$ would force $x \in D_a$, since $D_a$ is downward closed --- contradicting $x \in D$); so $z \in D$.

Suppose $D$ is disconnected: $D = D' \cup D''$, with $D', D''$ nonempty and no comparabilities between them. Since $D$ is order-convex, any two comparable elements of $D$ are joined by a chain lying entirely in $D$, hence in the same piece; so $D_a \cup D'$ and $D_a \cup D''$ are each downward closed in $R$ --- downward closure from a point of $D'$ (resp.\ $D''$) either stays in $D_a$ or lands back in $D'$ (resp.\ $D''$), never crossing to the other piece. By the Birkhoff representation, there exist $q', q'' \in S$ with
\[
D_{q'} = D_a \cup D', \qquad D_{q''} = D_a \cup D''.
\]
Since $D', D''$ are nonempty proper subsets of $D$, $a < q', q'' < b$, and
\[
D_{q'} \cup D_{q''} = D_a \cup D' \cup D'' = D_b, \qquad D_{q'} \cap D_{q''} = D_a,
\]
so $q' \vee q'' = b$ and $q' \wedge q'' = a$: $q'$ and $q''$ form a decomposition of the interval $[a,b]$. Since $[a,b]$ is $Q$-prime, decomposition closedness implies that $q'$ and $q''$ belong to $Q$, a contradiction. Hence $D$ is connected.
\end{proof}

\section{Lemma 2 (Connected Poset Weight Lemma)}

\begin{lemma}\label{lem:weight}
Let $P$ be a finite connected poset. Then there exists a function $g : P \to \mathbb{R}$ with $\sum_{p \in P} g(p) = 0$ such that $\sum_{p \in D} g(p) > 0$ for every nonempty proper downset $D \subseteq P$.
\end{lemma}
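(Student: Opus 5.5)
The plan is to build $g$ out of the Hasse diagram of $P$, one cover relation at a time, so that the sum of $g$ over a downset counts the Hasse edges leaving that downset. For each cover relation $x \lessdot y$ in $P$, I would put weight $+1$ on the lower element $x$ and weight $-1$ on the upper element $y$. So I define
\[
g(p) \;=\; \#\{\, y \in P : p \lessdot y \,\} \;-\; \#\{\, x \in P : x \lessdot p \,\},
\]
which is the number of upper covers of $p$ minus the number of lower covers of $p$. The total $\sum_{p\in P} g(p)$ is $0$ immediately, because every cover relation contributes $+1$ and $-1$ exactly once.

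Next, fix a downset $D \subseteq P$ and regroup $\sum_{p\in D} g(p)$ edge by edge. An edge $x\lessdot y$ with both endpoints in $D$ contributes $+1-1=0$. An edge with neither endpoint in $D$ contributes nothing. An edge with $y \in D$ and $x \notin D$ cannot occur, since $D$ is downward closed. The only remaining edges are those with $x\in D$ and $y\notin D$, and each contributes $+1$. Hence
\[
\sum_{p\in D} g(p) \;=\; \#\{\, (x,y) : x\lessdot y,\ x\in D,\ y\notin D \,\}.
\]

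Finally, I would use connectedness. If $D$ is nonempty and proper, then $D$ and $P\setminus D$ are both nonempty. Because the undirected Hasse diagram of $P$ is connected, some Hasse edge joins a point of $D$ to a point of $P\setminus D$. By the exclusion above, that edge must have its lower end in $D$ and its upper end outside $D$, so the count is at least $1$ and $\sum_{p\in D} g(p)>0$.

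There is no real obstacle here. The only step needing care is the edge-by-edge regrouping, and in particular checking that the downset property rules out edges entering $D$ from above. Connectedness is used exactly once, to guarantee at least one edge leaving $D$. This is the form in which Lemma~\ref{lem:connected} will later supply the hypothesis: $P$ will be the difference poset $D_b\setminus D_a$ of a $Q$-prime interval.
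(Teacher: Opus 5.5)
Your proof is correct and is essentially the paper's argument: the paper uses the same edge-counting construction $g(v)=\deg^+(v)-\deg^-(v)$, but only over the edges of a spanning tree of the Hasse diagram, whereas you use every cover relation. In both versions $\sum_{p\in D} g(p)$ is the number of edges leaving $D$ and connectedness forces at least one such edge, so your variant works equally well and avoids choosing a spanning tree.
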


\begin{proof}
Let $T$ be a spanning tree of the Hasse diagram of $P$. Orient every edge upward (from the lower endpoint to the upper endpoint), and assign every edge weight $1$. For $v \in P$ set
\[
g(v) := \deg^+(v) - \deg^-(v),
\]
where $\deg^+(v)$ and $\deg^-(v)$ are the number of tree edges directed out of and into $v$, respectively. Each tree edge contributes $+1$ to the tail vertex and $-1$ to the head vertex in the sum $\sum_{v \in P} g(v)$. Hence every edge contributes zero overall, so
\[
\sum_{v \in P} g(v) = 0.
\]

Let $D$ be a nonempty proper downset. Because $T$ is connected and $D$ is a nonempty proper subset of its vertices, there exists at least one edge joining $D$ to $P \setminus D$. If such an edge $e = (p, p')$, $p \lessdot p'$, had its upper endpoint $p' \in D$, then $p \le p'$ would force $p \in D$ as well (as $D$ is downward closed), contradicting that only one endpoint of $e$ lies in $D$. So every such crossing edge has its lower endpoint in $D$ and its upper endpoint outside $D$, i.e., it leaves $D$.

Now sum $g$ over $D$: an edge with both endpoints in $D$ contributes $+1$ (to $\deg^+$ of its lower endpoint) and $-1$ (to $\deg^-$ of its upper endpoint), which cancel; an edge with neither endpoint in $D$ contributes nothing; and each crossing edge, leaving $D$, contributes only its $+1$ term. Hence
\[
\sum_{v \in D} g(v) = \#\{\text{tree edges leaving } D\},
\]
which is strictly positive, since $T$'s connectivity guarantees at least one crossing edge.

No choice of a root is required; the argument applies to any spanning tree of the Hasse diagram of an arbitrary connected poset.
\end{proof}

\begin{remark}\label{rem:converse}
The converse also holds: if such a weight assignment exists, then $P$ must be connected. This characterization is not needed for the proof of Theorem~\ref{thm:main}.
\end{remark}

The remainder of the proof first constructs a modular function on the interval $S_0 = [q^-, q^+]$, and then extends it to all of $S$.

\section{Local Construction on $S_0$}

Fix an arbitrary maximal chain $q_1 = q^+ > q_2 > \cdots > q_N = q^-$ in $Q$. Each consecutive pair on the chosen chain determines a $Q$-prime interval $I_n := [q_{n+1}, q_n]$, so by Lemma~\ref{lem:connected} the difference poset $P_n := D_{q_n} \setminus D_{q_{n+1}}$ is connected, and by Lemma~\ref{lem:weight} carries weights $g_n$ giving a modular function
\[
F_n(s) := \sum_{r \in D_s \cap P_n} g_n(r), \qquad s \in S.
\]
Since $D_s \cap P_n$ is a downset of $P_n$ for every $s \in S$, Lemma~\ref{lem:weight} implies that $F_n(s) \ge 0$ everywhere: it is zero when $D_s \cap P_n$ is either empty or all of $P_n$, and strictly positive otherwise. Consequently, $F_0 = \sum_{n=1}^{N-1} F_n$ is nonnegative on $S$, and hence on $S_0$. In particular, for $s \in [q_{n+1}, q_n]$, $F_n(s) = 0$ occurs precisely when $s = q_{n+1}$ or $s = q_n$.

Set
\[
F_0 := \sum_{n=1}^{N-1} F_n.
\]
Thus $F_0$ is modular, nonnegative, and vanishes at every element of the chosen chain.

\section{Local to Global on $S_0$}

\subsection{Lemma 3 (Vanishing on $Q$)}

\begin{lemma}\label{lem:vanish}
A modular function on a finite distributive lattice that vanishes on some maximal chain vanishes identically.
\end{lemma}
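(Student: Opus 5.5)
The plan is to pass through the Birkhoff representation and the weight description of modular functions recalled in the Setup section. By that description, a modular $F$ on $S$ has the form
\[
F(s) = c + \sum_{r \in D_s} w(r), \qquad s \in S,
\]
for some constant $c$ and some weight $w : R \to \mathbb{R}$. To see this from the displayed formula, take $w=g$ and $c = F(s^+) - \sum_{r\in R} g(r)$. If one prefers not to lean on the stated converse, $w$ can be recovered directly: set $w(r) := F(s)-F(s')$ for any cover $s' \lessdot s$ with $D_s\setminus D_{s'}=\{r\}$. The argument then reduces to showing that the constant and every weight are forced to be zero.

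First I would describe a maximal chain $s^- = c_0 < c_1 < \cdots < c_k = s^+$ of $S$ in Birkhoff terms. A maximal chain must contain $s^-$ and $s^+$, since otherwise it could be extended. Each step $c_{i-1} \lessdot c_i$ must be a cover. A cover in the lattice of downsets adds exactly one element. Indeed, if $D_{c_i}\setminus D_{c_{i-1}}$ had two or more elements, removing a maximal element of this difference from $D_{c_i}$ would give a downset strictly between $D_{c_{i-1}}$ and $D_{c_i}$, contradicting maximality. Hence $D_{c_i} = D_{c_{i-1}} \cup \{r_i\}$, where $r_1,\dots,r_k$ is a linear extension of $R$. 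In particular every join-irreducible element occurs exactly once, and $k=|R|$.

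Next I would use the vanishing on the chain. From $F(c_0)=F(s^-)=0$ and $D_{s^-}=\emptyset$ we get $c=0$. From $0 = F(c_i)-F(c_{i-1}) = w(r_i)$ we get $w(r_i)=0$ for every $i$. Since $\{r_1,\dots,r_k\}=R$, the weight $w$ vanishes identically, and therefore $F(s)=0$ for all $s\in S$.

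The only step needing care is the cover argument, that a maximal chain runs from $s^-$ to $s^+$ and adds one join-irreducible at each step. Once this is in place, the rest is bookkeeping with the weight representation. The same argument applies verbatim to any interval $[q^-,q^+]$ regarded as a distributive lattice in its own right, with $R$ replaced by the difference poset $D_{q^+}\setminus D_{q^-}$. That is the setting in which the lemma is used for $S_0$.
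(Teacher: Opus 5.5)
Your proof is correct and is essentially the paper's own argument (Appendix~\ref{app:classical}). You write $F$ via Birkhoff weights, observe that a maximal chain adds each join-irreducible exactly once along a linear extension, and conclude that vanishing on the chain kills the constant and every weight; the only difference is that you prove the cover/linear-extension fact, which the paper cites as well known. One correction to your closing remark: the paper applies the lemma not to $S_0$ but to $Q$ itself (a distributive sublattice on which $F_0|_Q$ is modular), because the chosen chain $q_1>\cdots>q_N$ is maximal in $Q$ but generally not in $S_0$ --- in Appendix~\ref{app:example} the chain $12>4>1$ skips $2$, and indeed $F_0(2)=1$.
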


This classical fact is proved in Appendix~\ref{app:classical}. Applying Lemma~\ref{lem:vanish} to the restriction of $F_0$ to $Q$ gives $F_0 \equiv 0$ on all of $Q$.

\subsection{Lemma 4 (Positivity outside $Q$)}

\begin{lemma}\label{lem:positive}
We have $F_0(s) > 0$ for every $s \in S_0 \setminus Q$.
\end{lemma}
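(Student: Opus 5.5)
Since $F_0=\sum_n F_n$ with every $F_n\ge 0$ on $S$, it suffices to show that for each $s\in S_0\setminus Q$ some summand $F_n(s)$ is strictly positive. By the construction of $F_n$ via Lemma~\ref{lem:weight}, $F_n(s)>0$ exactly when $D_s\cap P_n$ is a nonempty proper downset of $P_n$. I will argue by contraposition: take $s\in S_0$ such that, for every $n$, $D_s\cap P_n$ is either empty or all of $P_n$, and show that $s\in Q$.

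First I record the structure of the difference posets. The chain $q_1>\cdots>q_N$ gives
\[
D_{q^+}\setminus D_{q^-}=P_1\sqcup\cdots\sqcup P_{N-1}
\quad\text{and}\quad
D_{q_n}=D_{q^-}\cup P_{N-1}\cup\cdots\cup P_n .
\]
Since $q^-\le s\le q^+$, we have
\[
D_{q^-}\subseteq D_s\subseteq D_{q^+}.
\]
Hence, for each $n$,
\[
D_{s\wedge q_n}=D_s\cap D_{q_n}=D_{s\wedge q_{n+1}}\cup (D_s\cap P_n).
\]

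The main step is a downward induction on $n$ showing that $s\wedge q_n\in Q$ for $n=N,N-1,\dots,1$. The base case is $s\wedge q_N=q^-\in Q$. For the inductive step, set $a:=s\wedge q_{n+1}\in Q$ and $t:=s\wedge q_n$. Note that $a\le q_{n+1}\le q_n$. The hypothesis splits into two cases.
\begin{itemize}
\item If $D_s\cap P_n=\emptyset$, then $D_t=D_a$, so $t=a\in Q$.
\item If $D_s\cap P_n=P_n$, then $D_t=D_a\cup P_n$. Using $D_a\subseteq D_{q_{n+1}}$ and $D_{q_n}=D_{q_{n+1}}\sqcup P_n$, this gives
\[
D_t\cup D_{q_{n+1}}=D_{q_n}\quad\text{and}\quad D_t\cap D_{q_{n+1}}=D_a .
\]
Thus $t\vee q_{n+1}=q_n$ and $t\wedge q_{n+1}=a$. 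So $t$ and $q_{n+1}$ form a decomposition of the interval $[a,q_n]$, whose endpoints lie in $Q$. Decomposition closedness then gives $t\in Q$.
\end{itemize}
At $n=1$ we obtain $s=s\wedge q^+\in Q$, which completes the contraposition.

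The only delicate point is choosing the right auxiliary interval. Working directly in the $Q$-prime interval $[q_{n+1},q_n]$ does not help, because $s$ need not lie in it. Instead one uses the interval $[s\wedge q_{n+1},\,q_n]$, whose lower endpoint lies in $Q$ by the inductive hypothesis. Checking the two set identities above is routine given the disjoint decomposition of $D_{q^+}\setminus D_{q^-}$ into the $P_n$.
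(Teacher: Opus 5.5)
Your proof is correct, and it takes a genuinely different route from the paper's.

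The paper argues by minimal counterexample. It takes a zero $s^*\notin Q$ of $F_0$ lying in a chain interval $[q^*,q^{**}]$ of minimal chain length, and splits at an intermediate chain element $q$. Modularity and nonnegativity of $F_0$ then give $F_0(s^*\wedge q)=F_0(s^*\vee q)=0$, and minimality puts both $s^*\wedge q$ and $s^*\vee q$ into $Q$. Decomposition closedness is finally applied to the pair $s^*,q$ on $[s^*\wedge q,\,s^*\vee q]$. Strict positivity from Lemma~\ref{lem:weight} is invoked only for elements strictly inside a single $Q$-prime interval.

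You never use modularity of $F_0$. Instead you use the block decomposition $D_{q^+}\setminus D_{q^-}=P_1\sqcup\cdots\sqcup P_{N-1}$ together with the exact zero set of each $F_n$ on all of $S$. You then climb the chain by a direct induction, certifying $s\wedge q_n\in Q$ through the decomposition $(s\wedge q_n,\,q_{n+1})$ of $[s\wedge q_{n+1},\,q_n]$. Since the top of that interval is already a chain element, only the bottom needs the inductive hypothesis, so no two-sided minimality argument is required.

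What each buys:
\begin{itemize}
\item Your version is more explicit. It makes transparent that the zeros of $F_0$ in $S_0$ are the elements $s$ for which $D_s\setminus D_{q^-}$ is a union of blocks $P_n$. For each such $s$ it exhibits a chain $q^-=s\wedge q_N\le\cdots\le s\wedge q_1=s$ lying entirely in $Q$.
\item The paper's version is more abstract and portable. It applies verbatim to any modular function that is nonnegative on $S_0$, vanishes on the chosen chain, and is positive on the interiors of the consecutive intervals. It needs no knowledge of how that function decomposes over the $P_n$.
\end{itemize}
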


\begin{proof}
Let
\[
B := \{ s \in S_0 \setminus Q : F_0(s) = 0 \}.
\]
Suppose, to the contrary, that $B \ne \emptyset$. Among all intervals whose endpoints lie on the chosen maximal chain and that contain an element of $B$, let
\[
I^* = [q^*, q^{**}]
\]
be one of minimum chain length. Such an interval exists since $S_0 = [q^-, q^+]$ has both endpoints on the chosen chain.

Choose $s^* \in I^* \cap B$.

If $q^*$ and $q^{**}$ were consecutive, Lemma~\ref{lem:weight} and the construction of the corresponding local function would give $F_0(s^*) > 0$, contradicting $s^* \in B$. Hence they are not consecutive on the chosen chain, so there is a chain element
\[
q \in (q^*, q^{**}).
\]
Set
\[
u = s^* \wedge q, \qquad v = s^* \vee q.
\]
Then
\[
u \in [q^*, q], \qquad v \in [q, q^{**}].
\]

\begin{center}
\begin{tikzpicture}[scale=0.9]
\node (qss) at (0,3) {$q^{**}$};
\node (v) at (-1,2) {$v$};
\node (s) at (1,1) {$s^*$};
\node (q) at (-1,1) {$q$};
\node (u) at (1,0) {$u$};
\node (qs) at (0,-1) {$q^*$};
\draw (qss)--(v)--(q)--(qs);
\draw (qss)--(s)--(u)--(qs);
\draw (v)--(s);
\draw (q)--(u);
\end{tikzpicture}
\end{center}
\noindent Schematic order diagram of the elements $q^*, u, s^*, q, v, q^{**}$ used in the proof of Lemma~\ref{lem:positive}. The pair $s^*, q$ decomposes the interval $[u,v]$, while $[q^*, q]$ and $[q, q^{**}]$ are intervals whose endpoints lie on the chosen maximal chain. Lines indicate order relations, not necessarily covers.

By modularity and $F_0(q) = F_0(s^*) = 0$,
\[
F_0(u) + F_0(v) = 0.
\]
Since $F_0$ is nonnegative on $S_0$, it follows that
\[
F_0(u) = F_0(v) = 0.
\]
If $u \notin Q$, then $u \in B$; but $u \in [q^*, q]$, an interval with chain endpoints and smaller chain length than $I^*$, contradicting the minimality of $I^*$. Hence $u \in Q$. Similarly, $v \in Q$.

Now $s^*$ and $q$ form a decomposition of the interval $[u,v]$. Since $u, v, q \in Q$ and $Q$ is decomposition-closed, it follows that $s^* \in Q$, contradicting $s^* \in B$. Therefore $B = \emptyset$.
\end{proof}

Together, Lemmas~\ref{lem:vanish} and~\ref{lem:positive} give $F_0^{-1}(0) = Q$ exactly, on $S_0$.

\section{Global Assembly}

To extend the construction from $S_0$ to all of $S$, define
\[
F_{\mathrm{top}}(s) := |D_s \setminus D_{q^+}|.
\]
Clearly $F_{\mathrm{top}}(s) \ge 0$, with equality iff $s \le q^+$.
\[
F_{\mathrm{bot}}(s) := |D_{q^-} \setminus D_s|.
\]
Likewise, $F_{\mathrm{bot}}(s) \ge 0$, with equality iff $s \ge q^-$.

Both are modular (weight-sums with $g \equiv 1$ on the relevant region) and nonnegative by construction. Together with $F_0$ from Sections~6--7, they give the global function
\[
F := F_{\mathrm{top}} + F_0 + F_{\mathrm{bot}}.
\]
$F$ is modular (sum of modular functions) and $F \ge 0$ on all of $S$. If $s \notin Q$: either $s \not\le q^+$ or $s \not\ge q^-$, in which case $F_{\mathrm{top}}(s) > 0$ or $F_{\mathrm{bot}}(s) > 0$ implies $F(s) > 0$; or $q^- \le s \le q^+$, i.e., $s \in S_0 \setminus Q$, in which case $F_0(s) > 0$ by Lemma~\ref{lem:positive}. Conversely if $s \in Q$, then $q^- \le s \le q^+$ so $F_{\mathrm{top}}(s) = F_{\mathrm{bot}}(s) = 0$, and $F_0(s) = 0$ by Lemma~\ref{lem:vanish}. Hence
\[
F^{-1}(0) = Q.
\]
Hence $Q = \arg\min F$, completing the proof of Theorem~\ref{thm:main}. \qed

\section{Closing Remarks}

We have characterized the minimizer sets of modular functions on finite distributive lattices by a purely lattice-theoretic closure property. The proof is constructive and yields an explicit modular function from any maximal chain of the given decomposition-closed sublattice.

Our characterization suggests that decomposition-closedness is an intrinsic lattice-theoretic notion, independent of its role in modular optimization. It would be interesting to investigate its role in other lattice-theoretic settings and to determine whether analogous characterizations hold for broader classes of lattices or for other classes of functions.

The result sits naturally alongside the classical fact that the minimizer set of a submodular function on a distributive lattice is always a sublattice, with every sublattice so achievable \cite{Fujishige2005,Topkis1998}, and the sharper recent result that every distributive lattice already arises from the more restrictive class of $M^\natural$-concave functions \cite{FujiiKijima2021}. Modular functions are strictly more restrictive still --- both sub- and supermodular --- and Theorem~\ref{thm:main} shows exactly how much this costs: the achievable minimizer sets shrink from all sublattices to the decomposition-closed ones.

A different, purely lattice-theoretic closure condition on sublattices --- closure under taking relative complements --- has recently been studied by Cz\'edli \cite{Czedli2022}. Decomposition-closedness is a distinct condition, triggered by a pair of elements already in the sublattice rather than by a fixed witness element; we are not aware of a previous study of decomposition-closedness in the present sense.

Finally, we note that the present characterization was developed in the course of our work on designing equitable and modular stable matching rules \cite{AlkanYildizWP}, where decomposition-closed sublattices of the (distributive) lattice of stable matchings describe exactly which subsets of stable outcomes can be selected by an additively separable criterion. We expect the characterization to be of independent use wherever a distributive lattice of outcomes is filtered by a linear or additive scoring rule.

\appendix

\section{Proof of Lemma~\ref{lem:vanish} (Classical Fact)}\label{app:classical}

\begin{proof}
Let $L$ be the lattice, $Z$ its join-irreducibles, $\omega : L \to \mathcal{P}(Z)$ the Birkhoff map, and $F$ modular on $L$ with weight $h : Z \to \mathbb{R}$, so
\[
F(t) = F(\max L) - \sum_{z \in Z \setminus \omega(t)} h(z).
\]
Let $t_1 > t_2 > \cdots > t_M$ be a maximal chain on which $F$ vanishes. Since the chain is maximal, each step is a cover, so $\omega(t_{n+1}) = \omega(t_n) \setminus \{z_n\}$ for a single $z_n \in Z$, and as $n$ ranges over $1, \dots, M-1$, the $z_n$ range over all of $Z$, each exactly once, by the well-known correspondence between maximal chains of a distributive lattice and linear extensions of its join-irreducibles. Then
\[
h(z_n) = F(t_n) - F(t_{n+1}) = 0 - 0 = 0
\]
for every $n$, so $h \equiv 0$ on $Z$. Hence $F(t) = F(\max L) - 0 = F(\max L)$ for every $t \in L$; since $F$ vanishes at $t_1 = \max L$, $F \equiv 0$ on $L$.
\end{proof}

\section{The $N_5$ Counterexample}\label{app:N5}

We exhibit a decomposition-closed sublattice of a finite (nondistributive) lattice that is not the minimizer set of any modular function, showing that distributivity in Theorem~\ref{thm:main} cannot be dropped.

Let $N_5 = \{0, x, y, z, 1\}$ be the pentagon lattice, with $0 \lessdot x \lessdot y \lessdot 1$ a three-step chain, $0 \lessdot z \lessdot 1$, and $x, z$ incomparable, $y, z$ incomparable. Figure~\ref{fig:N5} shows its Hasse diagram.

$N_5$ is the standard smallest nonmodular (hence nondistributive) lattice.

\begin{figure}[h]
\centering
\begin{tikzpicture}[scale=0.9]
\node (one) at (0,3) {$1$};
\node (y) at (-1,2) {$y$};
\node (z) at (1,1.5) {$z$};
\node (zero) at (0,0) {$0$};
\draw (one)--(y);
\draw (y) -- (-1,1);
\draw (-1,1) -- (zero);
\draw (one)--(z)--(zero);
\node[circle,draw,fill=gray!30,inner sep=1.5pt] (x) at (-1,1) {$x$};
\end{tikzpicture}
\caption{The pentagon lattice $N_5$. The shaded node marks $Q = \{x\}$, which Proposition~\ref{prop:N5} shows is decomposition-closed but not the minimizer set of any modular function.}
\label{fig:N5}
\end{figure}

\begin{proposition}\label{prop:N5}
$Q = \{x\}$ is a decomposition-closed sublattice of $N_5$ that is not the minimizer set of any modular function on $N_5$.
\end{proposition}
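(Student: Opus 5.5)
Two things have to be checked: that $Q=\{x\}$ is a decomposition-closed sublattice, and that no modular $F$ on $N_5$ has $\arg\min F=\{x\}$.

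\emph{Sublattice and closure.} A singleton is trivially closed under $\wedge$ and $\vee$. For decomposition closedness, the only interval $[a,b]$ with $a,b\in Q$ is $[x,x]=\{x\}$. Its only decomposition is the pair $(x,x)$, and this pair lies in $Q$. So the condition of Definition~\ref{def:decomp} holds.

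\emph{No modular function works.} The approach is to show that every modular $F$ on $N_5$ is forced to take equal values at $0$, $x$ and $y$.
\begin{itemize}
\item Apply modularity to the pair $(x,z)$. Since $x\wedge z=0$ and $x\vee z=1$, this gives $F(x)+F(z)=F(0)+F(1)$.
\item Apply it to the pair $(y,z)$. Since $y\wedge z=0$ and $y\vee z=1$, this gives $F(y)+F(z)=F(0)+F(1)$.
\item Subtracting the two equations gives $F(x)=F(y)$.
\end{itemize}
Hence whenever $x$ is a minimizer, so is $y\neq x$, and $\arg\min F\neq\{x\}$.

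Notice that $0$ plays no role here. The obstruction is simply that $x$ and $y$ are both complements of $z$ in $[0,1]$. This is exactly the non-modularity of the pentagon. The only step needing care is reading off the meets and joins correctly from the Hasse diagram: $x\wedge z=y\wedge z=0$ and $x\vee z=y\vee z=1$.

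Finally, this example does not contradict Proposition~\ref{prop:necessity}. There, decomposition closedness is checked only on intervals whose endpoints lie in $Q$, and $[0,1]$ is not such an interval.
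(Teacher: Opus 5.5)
Your proof is correct and follows the paper's argument essentially verbatim: the singleton is trivially a decomposition-closed sublattice, and modularity on the two incomparable pairs $(x,z)$ and $(y,z)$ forces $F(x)=F(y)$, so $y$ is a minimizer whenever $x$ is. One small correction: your roadmap sentence claims every modular $F$ agrees at $0$, $x$ and $y$, which is false (e.g.\ $F(0)=0$, $F(x)=F(y)=F(z)=1$, $F(1)=2$ is modular), but you only prove and use $F(x)=F(y)$, so the argument is unaffected.
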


\begin{proof}
Since $Q$ is a singleton, it is trivially a sublattice; and the only decomposition of the interval $[x,x]$ is $s = s' = x$ (as $s \wedge s' = s \vee s' = x$ forces $x \le s, s' \le x$ and hence $s = s' = x$), which lies in $Q$. So $Q$ is decomposition closed.

Now let $F : N_5 \to \mathbb{R}$ be modular. Every pair of comparable elements gives the trivial identity $F(s) + F(s') = F(s) + F(s')$, so the modularity equation is only informative on the two incomparable pairs: $(x,z)$, with $x \wedge z = 0$, $x \vee z = 1$, and $(y,z)$, with $y \wedge z = 0$, $y \vee z = 1$. These give
\[
F(x) + F(z) = F(0) + F(1) = F(y) + F(z),
\]
so $F(x) = F(y)$ for every modular $F$ on $N_5$. Consequently $x$ can never be the unique minimizer of a modular function: whenever $x \in \arg\min F$, so is $y$. Hence no modular function has minimizer set exactly $\{x\}$.
\end{proof}

This obstruction illustrates where the distributive hypothesis enters: $N_5$ is not isomorphic to the lattice of downsets of its poset of join-irreducibles, so the weight-based Birkhoff representation used in Section~3 is unavailable.

\section{A Worked Example}\label{app:example}

We illustrate the construction of Theorem~\ref{thm:main} on a small concrete lattice.

Let $S$ be the divisors of $12$ ordered by divisibility, with $s \wedge s' = \gcd(s,s')$ and $s \vee s' = \operatorname{lcm}(s,s')$. Writing each divisor by its exponents of $2$ and of $3$, $S$ is isomorphic to the product of chains $\{0,1,2\} \times \{0,1\}$:
\[
1 = (0,0), \quad 2 = (1,0), \quad 4 = (2,0), \quad 3 = (0,1), \quad 6 = (1,1), \quad 12 = (2,1).
\]
Figure~\ref{fig:12} shows the Hasse diagram. $S$ is distributive, with join-irreducibles $R = \{a_1 \lessdot a_2\} \cup \{b_1\}$: $a_1, a_2$ correspond to reaching exponent $1$, respectively $2$, of the prime $2$ (i.e.\ to the elements $2$ and $4$), and $b_1$ corresponds to reaching exponent $1$ of the prime $3$ (i.e.\ to the element $3$).

\begin{figure}[h]
\centering
\begin{tikzpicture}[scale=0.9]
\node (s2) at (-1,1) {$2$};
\node (s6) at (1,2) {$6$};
\draw (0,3)--(-1,2)--(s2)--(0,0);
\draw (0,3)--(s6)--(1,1)--(0,0);
\draw (-1,2)--(s6);
\draw (s2)--(1,1);
\node[circle,draw,fill=gray!30,inner sep=1.5pt] (t12) at (0,3) {$12$};
\node[circle,draw,fill=gray!30,inner sep=1.5pt] (f4) at (-1,2) {$4$};
\node[circle,draw,fill=gray!30,inner sep=1.5pt] (s3) at (1,1) {$3$};
\node[circle,draw,fill=gray!30,inner sep=1.5pt] (s1) at (0,0) {$1$};
\end{tikzpicture}
\caption{The divisor lattice of $12$. Shaded nodes form $Q = \{1,3,4,12\}$. The modular function $F$ constructed below vanishes exactly on the shaded nodes and equals $1$ on the two unshaded ones, $2$ and $6$.}
\label{fig:12}
\end{figure}

Let $Q = \{1,3,4,12\}$, shaded in Figure~\ref{fig:12}. Here $3$ and $4$ are incomparable, with $3 \wedge 4 = 1$ and $3 \vee 4 = 12$, so $Q$ is a sublattice with $q^- = 1$ and $q^+ = 12$; in fact $q^-$ and $q^+$ are the bottom and top of $S$ itself, so $S_0 = [q^-, q^+] = S$. Unlike the interval-shaped examples above, $Q$ is now a proper subset of $S_0$: the elements $2$ and $6$ lie in $S_0 \setminus Q$, and it is exactly on these two elements that Lemma~\ref{lem:positive} will have to do real work.

Moreover, $Q$ is decomposition closed: the only nontrivial decomposition of an interval with endpoints in $Q$ is the pair $3, 4$, which decomposes $[1,12]$, and both elements belong to $Q$.

Take the maximal chain $q_1 = 12 > q_2 = 4 > q_3 = 1$ of $Q$ (the element $3$ is off this chain, and is handled by Lemma~\ref{lem:vanish} below). The two $Q$-prime intervals are $I_1 = [4,12]$ and $I_2 = [1,4]$.

$I_1 = [4,12]$ is a single cover, with singleton difference poset $\{b_1\}$; as in the discussion above, this forces $F_1 \equiv 0$.

$I_2 = [1,4]$ is more interesting: it skips the element $2$, with difference poset $D_4 \setminus D_1 = \{a_1, a_2\}$, a connected two-element chain $a_1 \lessdot a_2$. Here Lemma~\ref{lem:weight} gives genuinely nonzero weights: orienting the spanning edge $a_1 \to a_2$, we get $g(a_1) = 1$, $g(a_2) = -1$. The resulting function $F_2(s) = \sum_{r \in D_s \cap \{a_1, a_2\}} g(r)$ evaluates to
\[
F_2(1) = 0, \quad F_2(2) = 1, \quad F_2(3) = 0, \quad F_2(4) = 0, \quad F_2(6) = 1, \quad F_2(12) = 0,
\]
since $D_2 \cap \{a_1,a_2\} = \{a_1\}$ and $D_6 \cap \{a_1,a_2\} = \{a_1\}$ each contribute $g(a_1) = 1$, while $D_4$ and $D_{12}$ contain all of $\{a_1,a_2\}$ (summing to $0$) and $D_1, D_3$ meet it not at all.

Since $q^- = 1$ and $q^+ = 12$ are the bottom and top of $S$, both boundary terms vanish, $F_{\mathrm{top}} \equiv F_{\mathrm{bot}} \equiv 0$, so $F = F_0 = F_1 + F_2 = F_2$:
\[
F(1) = 0, \quad F(2) = 1, \quad F(3) = 0, \quad F(4) = 0, \quad F(6) = 1, \quad F(12) = 0.
\]
As Lemma~\ref{lem:vanish} predicts, $F$ vanishes at the off-chain element $3$ too, even though $3$ played no role in the construction: modularity on the incomparable pair $(3,4)$ gives $F(3) + F(4) = F(1) + F(12) = 0$, and $F(4) = 0$ forces $F(3) = 0$. Meanwhile $F$ is strictly positive exactly on $S_0 \setminus Q = \{2,6\}$, as Lemma~\ref{lem:positive} guarantees.

One checks directly that $F$ is modular: the only incomparable pairs of $S$ are $(2,3), (4,3), (4,6)$, and
\[
F(2) + F(3) = 1 = F(1) + F(6),
\]
\[
F(4) + F(3) = 0 = F(1) + F(12),
\]
\[
F(4) + F(6) = 1 = F(2) + F(12).
\]
Thus $Q = \arg\min F$, exactly as Theorem~\ref{thm:main} predicts.


\begin{thebibliography}{9}

\bibitem{Birkhoff1937} G. Birkhoff, \emph{Rings of sets}, Duke Mathematical Journal \textbf{3} (1937), no.~3, 443--454. \url{https://doi.org/10.1215/S0012-7094-37-00334-X}

\bibitem{DaveyPriestley} B. A. Davey and H. A. Priestley, \emph{Introduction to Lattices and Order}, 2nd ed., Cambridge University Press, Cambridge, 2002. (See Chapter 5, ``Representation: The Finite Case,'' pp.~112--124.)

\bibitem{Fujishige2005} S. Fujishige, \emph{Submodular Functions and Optimization}, 2nd ed., Annals of Discrete Mathematics 58, Elsevier, Amsterdam, 2005.

\bibitem{Topkis1998} D. M. Topkis, \emph{Supermodularity and Complementarity}, Princeton University Press, Princeton, NJ, 1998.

\bibitem{FujiiKijima2021} T. Fujii and S. Kijima, \emph{Representing the minimizer sets of distributive lattices by $M^\natural$-concave functions}, Operations Research Letters \textbf{49} (2021), no.~5, 664--668.

\bibitem{Czedli2022} G. Cz\'edli, \emph{A property of lattices of sublattices closed under taking relative complements and its connection to 2-distributivity}, Mathematica Pannonica \textbf{28} (2022), 109--117. DOI: 10.1556/314.2022.00014.

\bibitem{AlkanYildizWP} A. Alkan and K. Yildiz, \emph{Equitable Stable Matchings Under Modular Assessment}, working paper.

\end{thebibliography}
\end{document}